\newtheorem{thm}{Theorem}[section]
\newtheorem{prop}[thm]{Proposition}
\newtheorem{lem}[thm]{Lemma}
\newtheorem{cor}[thm]{Corollary}
\newtheorem{rem}[thm]{Remark}
\DeclareMathOperator{\bR}{\mathbb{R}}
\DeclareMathOperator{\cG}{\mathcal{G}}
\DeclareMathOperator{\cL}{\mathcal{L}}
\DeclareMathOperator{\cP}{\mathcal{P}}
\DeclareMathOperator{\cR}{\mathcal{R}}
\DeclareMathOperator{\cS}{\mathcal{S}}
\DeclareMathOperator{\Lk}{\text{Lk}}
\DeclareMathOperator{\link}{\text{link}}
\begin{document}
\title[Right-handed flows]{Volume-preserving right-handed vector fields are conformally Reeb}
\author{Rohil Prasad} 
\begin{abstract} Right-handed and Reeb vector fields are two rich classes of vector fields on closed, oriented three-manifolds. Prior work of Dehornoy and Florio--Hryniewicz has produced many examples of Reeb vector fields which are right-handed. We prove a result in the other direction. We show that the closed two-form associated to a volume-preserving right-handed vector field is contact-type. This implies that any volume-preserving right-handed vector field is equal to a Reeb vector field after multiplication by a positive smooth function. Combining our result with theorems of Ghys and Taubes shows that any volume-preserving right-handed vector field has a global surface of section.  
\end{abstract}

\maketitle

\section{Introduction} 

\subsection{Statement of main results} The purpose of this note is to show the following theorem regarding volume-preserving right-handed vector fields.

\begin{thm}
\label{thm:mainTechnical} Let $X$ be a volume-preserving right-handed vector field on a closed, oriented rational homology three-sphere $M$ with volume form $\Omega$. Then the two-form $\omega = \Omega(X, -)$ is contact-type. 
\end{thm}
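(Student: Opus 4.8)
The plan is to convert the contact-type condition into a purely dynamical inequality and then read that inequality off from right-handedness. First I would record that $\omega = \iota_X\Omega$ is closed: since $X$ preserves $\Omega$ we have $d\omega = \mathcal{L}_X\Omega - \iota_X\,d\Omega = 0$. Because $M$ is a rational homology sphere, $H^2(M;\mathbb{R}) = 0$, so $\omega$ admits a primitive $\lambda_0$ with $d\lambda_0 = \omega$. The pointwise computation $\iota_X(\lambda\wedge\Omega) = \lambda(X)\,\Omega - \lambda\wedge\iota_X\Omega$, together with $\lambda\wedge\Omega = 0$ (a $4$-form on a $3$-manifold), gives the identity $\lambda\wedge\omega = \lambda(X)\,\Omega$ for every primitive $\lambda$. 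Hence $\omega$ is contact-type, i.e.\ some primitive $\lambda$ satisfies $\lambda\wedge d\lambda > 0$, if and only if there is a primitive $\lambda$ with $\lambda(X) > 0$ everywhere.

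Next I would use $H^1(M;\mathbb{R}) = 0$: any two primitives of $\omega$ differ by a closed, hence exact, $1$-form $df$. Writing $a := \lambda_0(X)$, the search for a good primitive becomes the search for a smooth function $f$ with
\[
a + X(f) > 0 \quad\text{everywhere on } M,
\]
where $X(f) = df(X)$ is the derivative along the flow. I would then invoke the following duality lemma: such an $f$ exists if and only if $\int_M a\, d\mu > 0$ for every $X$-invariant Borel probability measure $\mu$. The forward direction is immediate since $\int_M X(f)\,d\mu = 0$ for invariant $\mu$. For the converse I would apply Hahn--Banach in $C^0(M)$: the set $\{-X(f) + h : f\in C^\infty(M),\ \min h > 0\}$ is open and convex, and if $a$ were not in it, a separating signed measure would have to annihilate all coboundaries $X(f)$ and be nonnegative, i.e.\ be a nonzero nonnegative $X$-invariant measure with $\int_M a\, d\mu \le 0$; normalizing yields a contradiction. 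Because the minimum of the continuous linear functional $\mu\mapsto\int_M a\,d\mu$ over the compact convex set of invariant probability measures is attained at an extreme point, it suffices to verify positivity for ergodic $\mu$.

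It remains to prove $\int_M \lambda_0(X)\,d\mu > 0$ for every ergodic invariant probability measure $\mu$. The quantity $I(\mu) := \int_M \lambda(X)\,d\mu$ is independent of the choice of primitive, since changing $\lambda$ by $df$ changes the integrand by $X(f)$, which integrates to zero against $\mu$. The heart of the argument, and the step I expect to be the main obstacle, is to identify $I(\mu)$ with an asymptotic linking number. Since $\lambda$ is a primitive of the ``vorticity'' form $\iota_X\Omega$, integrating $\lambda$ over long, closed-up orbit segments computes the linking of those segments with the whole field weighted by $\Omega$; passing to the limit guaranteed by the Birkhoff ergodic theorem (and the existence theorem for asymptotic linking numbers) should give $I(\mu) = c\cdot\mathrm{lk}(\mu,\Omega_{\mathrm{norm}})$, where $\Omega_{\mathrm{norm}}$ is the normalized volume measure and $c>0$. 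Since $\Omega_{\mathrm{norm}}$ is itself an $X$-invariant probability measure, the definition of right-handedness, namely positivity of the asymptotic linking pairing between any two invariant measures, yields $\mathrm{lk}(\mu,\Omega_{\mathrm{norm}}) > 0$, hence $I(\mu) > 0$. Making this limit identification rigorous, i.e.\ controlling the convergence of the closed-up linking cocycle and matching normalization conventions so that the ambient measure occupies the second slot of the linking form, is the delicate part; everything else is formal once this is in place.
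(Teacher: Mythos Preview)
Your outline is essentially the paper's own strategy: reduce via McDuff's contact-type criterion to showing $\int_M \lambda(X)\,d\mu>0$ for every invariant probability measure, identify this integral with a linking of $\mu$ against the volume, and invoke right-handedness. The paper cites McDuff directly rather than re-proving it by Hahn--Banach, but that is cosmetic. For the step you flag as the hard one, the paper's device is the Gauss linking form $\cL$ of Vogel: one shows $\int_M\lambda(X)\,d\mu=\int_{M\times M}\cL_{p,q}(X,X)\,(d\mu\times\Omega)$, then applies Tempelman's multiparameter pointwise ergodic theorem to relate this double integral to limits of honest linking numbers of long closed-up orbit segments, with Lemma~\ref{lem:gaussLinkingGeodesics} controlling the closing arcs.

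One genuine slip to fix: you write ``since $\Omega_{\mathrm{norm}}$ is itself an $X$-invariant probability measure, the definition of right-handedness yields $\mathrm{lk}(\mu,\Omega_{\mathrm{norm}})>0$.'' But right-handedness, as defined here and in Ghys, is a positivity statement for pairs of \emph{ergodic} measures, and $\Omega$ is typically not ergodic. The paper handles this by a second ergodic decomposition: assuming $\int\cL(X,X)\,(d\mu\times\Omega)\le 0$, one extracts an ergodic $\eta$ with $\int\cL(X,X)\,(d\mu\times d\eta)\le 0$, and only then confronts the right-handedness hypothesis for the ergodic pair $(\mu,\eta)$. Your sketch needs this extra step (and, when $\mu$ is supported on a periodic orbit, a separate argument via a global surface of section and Stokes).
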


Theorem \ref{thm:mainTechnical} implies the titular theorem of this note, which states that a volume-preserving right-handed vector field is equal to a Reeb vector field after multiplication by a positive smooth function.

\begin{thm}
\label{thm:main} Any volume-preserving right-handed vector field $X$ on a closed, oriented rational homology three-sphere $M$ is conformally Reeb; there is a contact form $\lambda$ on $M$ with Reeb vector field $R$ and a positive smooth function $f > 0$ such that $R = fX$. 
\end{thm}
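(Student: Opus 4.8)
The plan is to deduce Theorem \ref{thm:main} directly from Theorem \ref{thm:mainTechnical}, which supplies the hard analytic input; what remains is a short formal argument in multilinear algebra. Since $X$ is volume-preserving, Cartan's formula gives $0 = \cL_X \Omega = d(\iota_X \Omega)$, so the two-form $\omega = \Omega(X,-) = \iota_X \Omega$ is closed. By Theorem \ref{thm:mainTechnical}, $\omega$ is contact-type: there is a one-form $\lambda$ with $d\lambda = \omega$ and $\lambda \wedge \omega > 0$ (a positive multiple of the orientation). In particular $\lambda \wedge d\lambda > 0$, so $\lambda$ is a contact form; I would let $R$ denote its Reeb vector field, characterized by $\iota_R d\lambda = 0$ and $\lambda(R) = 1$.

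First I would check that $X$ is nowhere vanishing: at any point where $X = 0$ we would have $\omega = \iota_X \Omega = 0$, hence $\lambda \wedge \omega = 0$, contradicting the contact condition. Next, since $\iota_X \omega = \iota_X \iota_X \Omega = 0$, the field $X$ lies in $\ker \omega = \ker d\lambda$. Because $\lambda \wedge d\lambda$ is nowhere zero, $d\lambda$ has rank two at every point, so its kernel is a line field, and both the nonvanishing field $X$ and the Reeb field $R$ are sections of this same line field. Hence there is a unique nowhere-vanishing smooth function $f$ with $R = fX$.

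It then remains to fix the sign of $f$. Contracting the identically-vanishing four-form $\lambda \wedge \Omega$ with $X$ and using $\iota_X \Omega = \omega$ yields the pointwise identity $\lambda(X)\,\Omega = \lambda \wedge \omega$. Since the right-hand side is a positive multiple of the orientation and $\Omega$ is a positive volume form, I conclude $\lambda(X) > 0$ everywhere, whence $1 = \lambda(R) = f\,\lambda(X)$ forces $f = 1/\lambda(X) > 0$. The point I want to stress is that there is no genuine obstacle in this deduction: all of the difficulty is concentrated in establishing that $\omega$ is contact-type, i.e.\ in Theorem \ref{thm:mainTechnical}. The only mild subtleties here are verifying that $X$ never vanishes and pinning down the sign of $f$, both of which reduce to the single algebraic relation $\lambda(X)\,\Omega = \lambda \wedge \omega$ coming from $\lambda \wedge \Omega = 0$.
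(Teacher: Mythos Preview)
Your argument is correct and is essentially the paper's own proof: both invoke Theorem~\ref{thm:mainTechnical} to get a primitive $\lambda$ with $\lambda\wedge\omega>0$, use $\iota_X\omega=0$ to put $X$ in the Reeb line, and set $f=\lambda(X)^{-1}$. Your derivation of $\lambda(X)>0$ via the identity $\lambda(X)\,\Omega=\lambda\wedge\omega$ just makes explicit what the paper states as an observation, and your separate check that $X$ is nowhere vanishing is redundant since right-handed vector fields are non-singular by definition.
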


Right-handed vector fields are a special class of non-singular vector fields on closed, oriented rational homology three-spheres introduced by Ghys \cite{Ghys09}. Informally, a non-singular vector field $X$ on a closed, oriented rational homology three-sphere is right-handed if any pair of long embedded loops which are nearly tangent to $X$ link positively. One example is the vector field generating the \emph{Hopf fibration} on the three-sphere $S^3$, where any pair of orbits form a Hopf link with linking number $1$. Theorem \ref{thm:main} indicates that, for any right-handed vector field $X$, there is a Reeb vector field $R$ with the same simple periodic orbits as $X$.

Taubes' proof \cite{TaubesWeinstein} of the Weinstein conjecture in dimension three shows that any Reeb flow on a closed $3$-manifold has at least one simple periodic orbit; a quantitative refinement by Cristofaro-Gardiner--Hutchings \cite{CGH} shows that there are at least two. We deduce the following corollary from this discussion and Theorem \ref{thm:main}. 

\begin{cor}
\label{cor:orbits} Any volume-preserving right-handed vector field $X$ on a closed, oriented rational homology three-sphere $M$ has at least two simple periodic orbits. 
\end{cor}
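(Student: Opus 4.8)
The plan is to deduce the corollary directly from Theorem~\ref{thm:main} together with the cited theorems of Taubes and Cristofaro-Gardiner--Hutchings. By Theorem~\ref{thm:main} there is a contact form $\lambda$ on $M$ with Reeb vector field $R$ and a positive smooth function $f>0$ such that $R=fX$. The contact condition on $\lambda$ is precisely what is needed to invoke the three-dimensional Weinstein conjecture, so I would apply the quantitative refinement of Cristofaro-Gardiner--Hutchings~\cite{CGH} to the Reeb flow of $R$ and conclude that $R$ has at least two simple periodic orbits.

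The remaining step is to transport this conclusion from $R$ back to $X$. Since $f>0$ everywhere, $X=f^{-1}R$ is a positive reparametrization of $R$: the two fields are everywhere positively proportional, so they share the same oriented trajectories as unparametrized curves and differ only in the speed at which these trajectories are traversed. Concretely, if $\gamma$ is a periodic orbit of $R$, solving $\gamma'=R(\gamma)=f(\gamma)\,X(\gamma)$, then the increasing reparametrization $t\mapsto s(t)$ with $s'(t)=f(\gamma(t))$ turns $\gamma$ into a curve $\eta$ solving $\eta'=X(\eta)$ with the same image. Because $f$ is continuous and positive on the compact manifold $M$, it is bounded away from $0$ and $\infty$, so $\eta$ has finite positive period and is again periodic; conversely every periodic orbit of $X$ arises this way. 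This yields a bijection between the periodic orbits of $R$ and those of $X$ preserving the underlying image circle.

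The only point requiring a little care is that this bijection preserves \emph{simplicity}, that is, the property of being embedded rather than a multiple cover. But simplicity depends only on the image and covering multiplicity of an orbit, both of which are unaffected by a positive reparametrization, so an embedded orbit of $R$ corresponds to an embedded orbit of $X$. I do not anticipate any genuine obstacle here: once Theorem~\ref{thm:main} is available, the corollary is a formal consequence of the orbit correspondence for positively proportional vector fields. Applying this correspondence to the two simple periodic orbits of $R$ produced above then gives at least two simple periodic orbits of $X$, completing the proof.
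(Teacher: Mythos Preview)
Your proposal is correct and follows exactly the approach the paper indicates: apply Theorem~\ref{thm:main} to obtain a Reeb vector field $R=fX$, invoke Cristofaro-Gardiner--Hutchings to get two simple periodic orbits of $R$, and observe that $R$ and $X$ share the same simple periodic orbits since they differ by a positive scalar function. The paper states this deduction in a single sentence (and earlier notes that $R$ and $X$ have the same simple periodic orbits), so your more detailed justification of the reparametrization bijection is simply an expansion of what the paper leaves implicit.
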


Theorem \ref{thm:main} is sharp in the sense that there are volume-preserving right-handed vector fields which are not Reeb vector fields; right-handed vector fields remain right-handed under conformal change while the same is not true for Reeb vector fields. As a concrete example, consider the Reeb vector field $R$ on $S^3 \subset \mathbb{R}^4$ associated to the standard Liouville contact form. The vector field $R$ generates the Hopf fibration, so all of its orbits are periodic. Wadsley's theorem\footnote{The theorem is stated for geodesible vector fields, but all Reeb vector fields are geodesible.} \cite{Wadsley75} shows that for any Reeb vector field on a closed, connected, oriented three-dimensional manifold, all of whose orbits are periodic, the orbits share a common period. We fix a positive function $f$ on $S^3$ such that the vector field $X = fR$ has all of its orbits closed, but two periodic orbits have rationally independent minimal periods. Therefore, $X$ cannot be a Reeb vector field, but it is right-handed because it is a multiple of the right-handed vector field $R$ by a positive smooth function. It is also volume-preserving; if we write $\Omega$ for the volume form on $S^3$ then $X$ preserves the volume form $f^{-1}\Omega$. 

Right-handed vector fields are interesting from a dynamical perspective because they potentially admit an abundance of global surfaces of section. A \emph{global surface of section} for a vector field $X$ on a three-manifold $M$ is a compact, embedded, oriented\footnote{We follow the orientation conventions in \cite[Remark $1.4$]{FlorioHryniewicz21}. The orientation on a global surface of section $\Sigma$ is the one determined by the orientation of $M$ and the co-orientation of $\Sigma \setminus \partial\Sigma$ by $X$.} surface $\Sigma$ with oriented boundary $\partial\Sigma$ equal to a union of periodic orbits of $X$, such that $\Sigma \setminus \partial\Sigma$ is transverse to $X$ and at any point $p \in M$ the forward and backward orbits of $p$ under the flow of $X$ both intersect $\Sigma$. The first return map for the surface of section $\Sigma$ induces a diffeomorphism from $\Sigma$ to itself, effectively reducing the study of the three-dimensional flow of $X$ to the study of a two-dimensional surface diffeomorphism. 

This has been used to great effect, for example, by Hofer--Wysocki--Zehnder \cite{HWZ} and Cristofaro-Gardiner--Hutchings--Pomerleano \cite{CGHP} to show that many Reeb flows have two or infinitely many periodic orbits. The general strategy is to show that the Reeb flow has a genus zero surface of section $\Sigma$ with at least one boundary component (which relies on pseudoholomorphic curve techniques in both cases). Then the first return map of the Reeb flow induces an area-preserving diffeomorphism on $\Sigma$, and any periodic orbit of the Reeb flow either is a boundary component or induced by an periodic point in $\Sigma$. Then one can appeal to a theorem of Franks \cite{franks} showing that area-preserving maps of an annulus have zero or infinitely many periodic points, which given the above proves the desired result.
 
\begin{thm}\label{thm:ghys}
\cite{Ghys09} Let $X$ be a right-handed vector field on a closed, oriented rational homology three-sphere $M$. Then any collection of periodic orbits of $X$ bound a global surface of section. 
\end{thm}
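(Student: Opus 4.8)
The plan is to reduce the construction of a global surface of section to the existence of a suitable closed one-form and then to produce that form from the positivity built into the definition of right-handedness. Fix a finite collection of periodic orbits $\gamma_1, \ldots, \gamma_k$ and write $L = \gamma_1 \cup \cdots \cup \gamma_k$ for their union. First I would recall the standard dictionary, going back to Tischler and Fried: a global surface of section with $\partial\Sigma \subset L$ corresponds to a circle fibration of the complement $M \setminus L$, which in turn is produced by a closed one-form $\alpha$ on $M \setminus L$ that is \emph{strictly positive on} $X$ everywhere on $M \setminus L$ and whose cohomology class restricts to the meridian class near each $\gamma_i$. The surface $\Sigma$ is then a fiber of a circle-valued primitive of $\alpha$, and the pointwise condition $\alpha(X) > 0$ simultaneously gives transversality of $\Sigma \setminus \partial\Sigma$ to $X$ and the return property in both time directions.

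Second, I would identify the correct cohomology class. Because $M$ is a rational homology three-sphere, $L$ is rationally null-homologous and bounds a rational Seifert surface; Poincar\'e--Lefschetz duality then supplies a distinguished \emph{linking class} $u \in H^1(M \setminus L; \bR)$ characterized by $\langle u, [c] \rangle = \mathrm{lk}(c, L)$ for every loop $c$ in $M \setminus L$, and $u$ automatically has the required meridian periods around the $\gamma_i$. This step is forced by the topology of $M$ and uses none of the dynamics; the dynamics enter only through the demand for a representative of $u$ that is positive on $X$.

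Third, and this is the heart of the matter, I would invoke the Schwartzman--Fried--Sullivan theory of asymptotic cycles. A closed one-form in the class $u$ that is pointwise positive on $X$ exists precisely when $u$ evaluates strictly positively on the cone of homology directions of $X$ on $M \setminus L$, equivalently on the Schwartzman asymptotic cycle $[\mu]$ of every $X$-invariant Borel probability measure $\mu$. For such a measure one has $\langle u, [\mu] \rangle = \mathrm{lk}(\mu, L)$, the asymptotic linking number of $\mu$ with $L$, and this is exactly where right-handedness does its work: by definition the asymptotic linking pairing of a right-handed field is positive, so $\langle u, [\mu] \rangle > 0$ for every invariant $\mu$ (the orbits $\gamma_i$ are themselves invariant measures, and the remaining measures link them positively). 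A compactness argument over the weak-$*$ compact set of invariant measures would upgrade this to strict positivity on the whole cone, and Fried's theorem would then produce the desired pointwise-positive representative $\alpha$.

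The step I expect to be the main obstacle is the third one: making the passage from \emph{positive on every invariant measure} to \emph{a genuine cross-section} fully rigorous in the presence of boundary. One must verify that $[\mu]$ is well defined and depends continuously on $\mu$, control measures that accumulate on $L$ so that the linking integral does not degenerate near the boundary (here the self-linking framing of the $\gamma_i$ must be matched to the meridian normalization of $u$), and confirm that the fibration of the open complement extends across excised tubular neighborhoods to give an honest global surface of section rather than merely a section of $M \setminus L$. Once strict positivity of $u$ on the compact cone is established, the remaining check that $\Sigma$ meets both the forward and backward orbit of every point is immediate from $\alpha(X) > 0$.
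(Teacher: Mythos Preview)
The paper does not contain a proof of this theorem: it is stated as a citation of \cite{Ghys09}, with the remark that an alternative proof appears in \cite[Appendix~B]{FlorioHryniewicz21}. So there is no ``paper's own proof'' to compare your proposal against.

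That said, your outline is essentially the known argument. Ghys' original proof, and the Florio--Hryniewicz reproof you are implicitly paralleling, both proceed via a Schwartzman--Fried--Sullivan type cross-section criterion: one shows that the linking class in $H^1(M\setminus L;\bR)$ pairs strictly positively with every invariant probability measure (this is exactly what right-handedness supplies), and then invokes Fried's theorem to produce a fibration of the complement and hence a global surface of section. Your identification of the delicate point --- controlling invariant measures that accumulate on $L$ and matching the boundary behaviour of the fibration to the Seifert/meridian framing --- is accurate; in \cite[Appendix~B]{FlorioHryniewicz21} this is handled by a careful blow-up along $L$ and an explicit verification that the rotation-number positivity in Condition~2 of the right-handedness definition gives the needed positivity for measures supported on the boundary tori. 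Your sketch does not yet address that case (you only invoke Condition~1 positivity for measures on $M\setminus L$), so if you were to write this out in full you would need to treat measures supported on the $\gamma_i$ themselves via the rotation-number hypothesis.
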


We refer the reader to Dehornoy \cite{Dehornoy17} and Florio--Hryniewicz \cite{FlorioHryniewicz21} for results demonstrating that certain geodesic and Reeb flows, respectively, are right-handed. An alternate proof of Theorem \ref{thm:ghys} can also be found in \cite[Appendix $B$]{FlorioHryniewicz21}. Corollary \ref{cor:orbits} and Theorem \ref{thm:ghys} imply the unconditional existence of global surfaces of section for volume-preserving right-handed vector fields. 

\begin{thm} \label{thm:gss}
Any volume-preserving right-handed vector field $X$ on a closed, oriented rational homology three-sphere $M$ with volume form $\Omega$ admits a global surface of section.
\end{thm}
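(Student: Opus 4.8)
The plan is to deduce the theorem directly by combining Corollary \ref{cor:orbits} with Ghys' Theorem \ref{thm:ghys}; essentially all of the difficulty has already been absorbed into those two statements, so the argument will be short.

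First I would use Corollary \ref{cor:orbits} to guarantee that $X$ possesses at least one simple periodic orbit. This is the step where the conformally Reeb property really enters: by Theorem \ref{thm:main} there is a contact form $\lambda$ with Reeb field $R$ and a positive function $f$ with $R = fX$, and since $R$ is a positive time-reparametrization of $X$ the two fields have exactly the same oriented closed orbits. Thus Taubes' resolution of the Weinstein conjecture (refined by Cristofaro-Gardiner--Hutchings) transfers from $R$ to $X$, which is what Corollary \ref{cor:orbits} records. The sole purpose of this step is to certify that the set of periodic orbits of $X$ is nonempty.

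Next I would fix any nonempty collection $\mathcal{P}$ of periodic orbits of $X$ --- for instance, a single simple periodic orbit furnished by the previous step. Since $X$ is in particular a right-handed vector field on a closed, oriented rational homology three-sphere, Theorem \ref{thm:ghys} applies (it requires only right-handedness, not volume-preservation) and produces a global surface of section $\Sigma$ with oriented boundary $\mathcal{P}$, which is exactly the conclusion sought. I do not expect a genuine obstacle here, since the substantive inputs have been established earlier; the only point demanding care is the logical order, namely that one must invoke Corollary \ref{cor:orbits} rather than Theorem \ref{thm:ghys} alone in order to know \emph{a priori} that periodic orbits exist, because Ghys' theorem only asserts that a \emph{given} collection of periodic orbits bounds a surface of section and says nothing about whether any such orbits exist.
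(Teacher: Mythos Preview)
Your proposal is correct and matches the paper's own argument exactly: the paper states just before Theorem~\ref{thm:gss} that it follows by combining Corollary~\ref{cor:orbits} (existence of periodic orbits) with Theorem~\ref{thm:ghys} (any collection of periodic orbits bounds a global surface of section), which is precisely the two-step argument you give.
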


\subsection{Conventions and notation} For the rest of this note, we fix the following notation. Fix a closed, oriented rational homology three-sphere $M$ equipped with a volume form $\Omega$ of volume $1$. Fix a smooth, non-singular volume-preserving vector field $X$ with flow $\{\phi^t\}_{t \in \bR}$ and define the two-form $\omega = \Omega(X, -)$. An application of Cartan's formula to the identity $L_X\Omega = 0$ shows that $\omega$ is closed. We will at times abuse notation and regard the three-form $\Omega$ as an $X$-invariant probability measure. Fix also the notation $\cP(X)$ for the space of $X$-invariant Borel probability measures, and fix a Riemannian metric $g$ on $M$ such that $\|X\|_g \equiv 1$. We do not require that the volume form associated to $g$ is equal to $\Omega$. 

\subsection{Outline of proof} The strategy of the proof of Theorem \ref{thm:mainTechnical} is as follows. We will define, via a Hopf-type integral, a function $\Lk_\omega$ on the space of $X$-invariant probability measures. This is another version of the ``linking number'' of an invariant measure $\mu$ and the volume form $\Omega$ first introduced in \cite{Ghys09}. We will show that if $X$ is right-handed, then $\Lk_\omega(\mu)$ is positive for any $X$-invariant probability measure $\mu$. It is then immediate from McDuff's contact-type criterion \cite{McDuff87} that $\omega$ is contact-type as desired.

\subsection{Acknowledgements} I would like to thank my advisor, Helmut Hofer, as well as Julian Chaidez, Umberto Hryniewicz, and Georgios Kotsovolis for useful conversations regarding this work. This work is supported by the NSF under Award \#DGE-1656466. 


\section{The Gauss linking form} \label{sec:gaussLinking}

Our main technical tool will be the \emph{Gauss linking form}, as introduced by Vogel \cite{Vogel03}. The Gauss linking form, which we denote by $L$, is a \emph{double form} on the product manifold $M \times M$. The \textbf{bundle of double forms} $$\pi_L^*(\Lambda^*(T^*M)) \otimes \pi_R^*(\Lambda^*(T^*M))$$ over $M \times M$ is the tensor product of the pullbacks of the bundle $\Lambda^*(T^*M)$ of differential forms by the projections
$$\pi_L,\,\pi_R: M \times M \to M.$$

Any differential form $\eta$ induces two distinct double forms $\eta^L$ and $\eta^R$, constructed by pulling back $\eta$ by $\pi_L$ and $\pi_R$ respectively. There are left and right exterior derivative operators $d_L$ and $d_R$ acting on double forms, as well as left and right Hodge star operators $\star_L$ and $\star_R$ induced by the Riemannian metric. The grading on $\Lambda^*(T^*M)$ induces a bigrading on the bundle of double forms. The operators $d_L$ and $d_R$ have bidegree $(1,0)$ and $(0,1)$.

Let $\cG$ denote the \emph{Green's form} of the Hodge Laplacian $\Delta$ associated to the Riemannian metric. This is an integrable double form, smooth outside the diagonal and satisfying the pointwise bound
$$\|\cG_{p,q}\| \lesssim \text{dist}(p,q)^{-1},$$
such that for any differential form $\beta$ the differential form
$$\eta_p = \int_{M} \cG_{p,q} \wedge \beta_q$$
satisfies the equation $\Delta \eta = \beta$. An explicit construction, using the Riemannian distance function to construct a parametrix, is found in \cite[Chapter V]{deRham}. The \emph{Gauss linking form} is defined by
$$\cL = \star_R d_R \cG.$$

It is immediate that $\cL$ is integrable and smooth outside of the diagonal in $M \times M$, satisfying the pointwise bound
\begin{equation} \label{eq:linkingFormBound} \|\cL_{p,q}\| \lesssim \text{dist}(p,q)^{-2}.\end{equation}

It will also be important to note the following behavior of $\cL$ with respect to the volume measure $\Omega$, which is also mentioned in \cite{KotschickVogel}. Write $\|\cL\|_R$ for the norm of $\cL$ in the right factor, which in terms of local product coordinates $(p,q)$ on $M \times M$ is equal to a differential form in the $p$-coordinates which may have coefficients depending on both $p$ and $q$. Then for each $p \in M$ the differential form
$$\int_M \|\cL_{p,q}\|_R \Omega(q)$$
exists and varies continuously in $p$. Next, we note the following technical property of the Gauss linking form $\cL$, proved in \cite{Vogel03}.

\begin{lem}
\label{lem:linkingFormIdentity} \cite[Lemma $2$]{Vogel03} For any smooth one-form $\eta$ on $M$ there is a smooth function $h$ on $M$ such that for any $p \in M$,
$$\eta(p) - \int_M (d\eta)^R_q \wedge \cL_{p,q} = dh(p).$$
\end{lem}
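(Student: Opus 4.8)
The plan is to recognize the Gauss linking integral as the Hodge-theoretic coexact part of $\eta$ and to peel off the remaining exact part as $dh$. Since $M$ is a rational homology three-sphere we have $H^1(M;\bR) = H^2(M;\bR) = 0$, so there are no nonzero harmonic $1$- or $2$-forms and the Green's operator $G$ (convolution against $\cG$) is a genuine two-sided inverse of the Hodge Laplacian $\Delta$ on forms of these degrees. Writing $\delta$ for the codifferential, the Hodge decomposition then reads
$$\eta = d\delta G\eta + \delta d G\eta = d(\delta G\eta) + \delta G(d\eta),$$
using that $G$ commutes with $d$ and $\delta$. The first summand is exact with potential $h := \delta G\eta$, a smooth function by elliptic regularity since $\eta$ is smooth and $G$ is smoothing of order two. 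Hence the lemma reduces to the single identity
$$\int_M (d\eta)^R_q \wedge \cL_{p,q} = \bigl(\delta G(d\eta)\bigr)(p),$$
which, combined with the displayed decomposition and $H=0$, yields $\eta(p) - \int_M (d\eta)^R_q\wedge\cL_{p,q} = d(\delta G\eta)(p) = dh(p)$, as claimed.

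To establish this identity I would unwind $\cL = \star_R d_R\cG$. Writing $\zeta := d\eta$, a smooth $2$-form, the integral $\int_M \zeta^R_q\wedge(\star_R d_R\cG)_{p,q}$ pairs $\zeta$ against $\cG$ in the right factor; interpreting $\wedge\,\star_R$ as the pointwise $L^2$ inner product in the $q$-variable and integrating $d_R$ by parts onto $\zeta$ would convert the expression into an integral of $\cG$ against $\delta\zeta$-type data. The one structural input needed to land exactly on $\delta G\zeta$ rather than $G\delta\zeta$ is the symmetry of the Green's form, $\cG_{p,q} = \pm\,{}^t\cG_{q,p}$, which intertwines the right-hand operators $\star_R, d_R$ defining $\cL$ with the left-hand operators computing $\delta_p$. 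Together with $\delta = \pm\star d\star$ and the defining relation $\Delta G = \mathrm{Id}$ (valid here precisely because the harmonic projection vanishes), this identifies the integral with $\delta_p(G\zeta)(p)$; in other words $\cL$ is, up to sign, the Schwartz kernel of the operator $\delta G$ acting on $2$-forms. The signs are bookkeeping and can be absorbed into $h$ or the orientation conventions.

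The genuine difficulty is analytic rather than algebraic: by the bound (\ref{eq:linkingFormBound}), $\cL$ blows up like $\mathrm{dist}(p,q)^{-2}$, so none of the formal manipulations above---differentiating under the integral sign, integrating $d_R$ by parts, and commuting $\delta_p$ inside the integral---are legitimate pointwise, and each must be justified as an operation on the locally integrable singular kernel, equivalently on currents. The cleanest way to control this is to treat $\cL$ as the kernel of the order $-1$ pseudodifferential operator $\delta G$, whose diagonal asymptotics are exactly those recorded in the parametrix construction of $\cG$ in \cite[Chapter V]{deRham}; the diagonal contribution that appears upon differentiation is precisely what reproduces the term $\mathrm{Id}$ in $\Delta G = \mathrm{Id}$, and it is here that the vanishing of the harmonic projection on a rational homology three-sphere is used. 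I expect this regularity analysis near the diagonal to be the main obstacle, with the Hodge decomposition and the topological vanishing $H^1(M;\bR)=0$ supplying everything else.
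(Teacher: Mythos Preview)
The paper does not give its own proof of this lemma; it is quoted as \cite[Lemma~$2$]{Vogel03} and used as a black box. So there is no in-paper argument to compare against.

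Your Hodge-theoretic outline is the right one and is essentially Vogel's argument: on a rational homology three-sphere the harmonic projection on $1$-forms vanishes, so $\eta = d(\delta G\eta) + \delta G(d\eta)$, and the task reduces to recognizing $\int_M (d\eta)^R_q\wedge\cL_{p,q}$ as $(\delta G(d\eta))(p)$, i.e.\ identifying $\cL=\star_R d_R\cG$ as the Schwartz kernel of $\delta G$ on $2$-forms. You have correctly isolated the two nontrivial ingredients, namely the symmetry of $\cG$ under swapping the factors (needed to pass from right-operators on the kernel to left-operators on the output) and the analytic justification of the integration by parts across the diagonal singularity. One caution: your remark that ``the signs are bookkeeping and can be absorbed into $h$'' is not quite right---if the identification $\int = \pm\,\delta G(d\eta)$ came out with the wrong sign you would get $\eta - \int = dh + 2\delta G(d\eta)$, which is not exact. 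The sign is fixed by the conventions for $\star$ and the orientation, and Vogel's computation checks it; you should too rather than waving it away.
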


Lemma \ref{lem:linkingFormIdentity} was used by Vogel to show that $\cL$ can be used to compute the linking number of two loops in $M$. It can be thought of as saying that $\cL$ is an ``inverse'' to the exterior derivative operator up to an error given by an exact form. 

\begin{lem}
\label{lem:gaussLinking} \cite[Theorem $3$]{Vogel03} Let $\gamma_1$ and $\gamma_2$ be a pair of disjoint, oriented embedded loops in $M$. Then the linking number $\text{link}(\gamma_1, \gamma_2)$ is equal to the integral
$$\int_{\gamma_1 \times \gamma_2} \cL.$$
\end{lem}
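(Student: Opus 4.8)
The plan is to deduce the statement from Vogel's identity (Lemma~\ref{lem:linkingFormIdentity}) by a regularization argument, exploiting the hypothesis that $M$ is a rational homology sphere. Since $H^2(M;\bR)\cong H_1(M;\bR)=0$, the loop $\gamma_2$ is rationally null-homologous, and its Poincar\'e dual (the closed $2$-current $\delta_{\gamma_2}$ defined by $\phi\mapsto\int_{\gamma_2}\phi$) is exact as a current. First I would fix a family of smooth closed $2$-forms $\beta_\varepsilon$, supported in tubular neighborhoods of $\gamma_2$ that shrink as $\varepsilon\to 0$, with uniformly bounded mass and converging weakly to $\delta_{\gamma_2}$; concretely one takes a Thom form of the normal bundle cut off by a bump function. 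Because $[\beta_\varepsilon]=\mathrm{PD}[\gamma_2]=0$ in $H^2(M;\bR)$, each $\beta_\varepsilon$ is exact, so I may choose smooth one-forms $\eta_\varepsilon$ with $d\eta_\varepsilon=\beta_\varepsilon$.

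Next I would apply Lemma~\ref{lem:linkingFormIdentity} to each $\eta_\varepsilon$, producing smooth functions $h_\varepsilon$ with
$$\eta_\varepsilon(p)-\int_M (\beta_\varepsilon)^R_q\wedge\cL_{p,q}=dh_\varepsilon(p).$$
Integrating this identity over the loop $\gamma_1$ annihilates the exact term $dh_\varepsilon$, and Fubini (legitimate since $\gamma_1$ is disjoint from the support of $\beta_\varepsilon$ for small $\varepsilon$, where $\cL$ is smooth) rearranges the result into
$$\int_{\gamma_1}\eta_\varepsilon=\int_M (\beta_\varepsilon)^R_q\wedge\Big(\int_{\gamma_1}\cL_{p,q}\Big).$$

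Finally I would pass to the limit $\varepsilon\to 0$ on both sides. On the left, $\int_{\gamma_1}\eta_\varepsilon\to\mathrm{link}(\gamma_1,\gamma_2)$: writing a rational multiple of $\gamma_1$ as a boundary $\partial D$ and applying Stokes gives $\int_{\gamma_1}\eta_\varepsilon$ as a multiple of $\int_D\beta_\varepsilon\to\int_D\delta_{\gamma_2}$, the intersection number of $D$ with $\gamma_2$, which is the linking number by its symmetric definition. On the right, the one-form $q\mapsto\int_{\gamma_1}\cL_{p,q}$ is smooth near $\gamma_2$ because $\gamma_1\cap\gamma_2=\emptyset$ and $\cL$ is smooth off the diagonal; since $\beta_\varepsilon\to\delta_{\gamma_2}$ weakly with bounded mass, the right-hand side converges to $\int_{\gamma_2}\big(\int_{\gamma_1}\cL_{p,q}\big)=\int_{\gamma_1\times\gamma_2}\cL$. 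Combining the two limits yields the claim. The main obstacle is making the two weak limits rigorous: one must verify that the approximants $\beta_\varepsilon$ have bounded mass and genuinely converge to $\delta_{\gamma_2}$, and one must track the orientation and Hodge-star conventions in $\cL=\star_R d_R\cG$ carefully to obtain $+\mathrm{link}(\gamma_1,\gamma_2)$ rather than its negative. The continuity of $\int_M\|\cL_{p,q}\|_R\,\Omega(q)$ and the pointwise bound~\eqref{eq:linkingFormBound} recorded earlier are exactly what guarantee the integrals in play are well defined and that the limit on the right is controlled.
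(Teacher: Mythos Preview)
The paper does not give its own proof of this lemma; it simply cites \cite[Theorem~3]{Vogel03}. The only indication of method is the sentence preceding the statement, which notes that Vogel deduced it from Lemma~\ref{lem:linkingFormIdentity}. Your proposal is exactly in that spirit: you apply Lemma~\ref{lem:linkingFormIdentity} to smooth primitives $\eta_\varepsilon$ of Thom-form approximations $\beta_\varepsilon$ of the Poincar\'e dual current of $\gamma_2$, integrate over $\gamma_1$ to kill the exact error, and pass to the limit. The argument is sound; the independence of $\int_{\gamma_1}\eta_\varepsilon$ from the choice of primitive uses $H^1(M;\bR)=0$, the left-hand limit is the classical intersection-theoretic definition of linking number, and the right-hand limit is justified because $q\mapsto\int_{\gamma_1}\cL_{p,q}$ is smooth near $\gamma_2$. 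So there is nothing to compare against beyond the hint, and your route matches it.
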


The final property of the Gauss linking form we need is the boundedness of its integral on pairs of short geodesics.

\begin{lem}
\label{lem:gaussLinkingGeodesics} Fix $r_0 \leq \text{injrad}(g)/100$. Then there is a constant $C = C(g) > 0$ depending on the Riemannian metric such that for any pair of embedded geodesics $\gamma_1$ and $\gamma_2$ of length less than or equal to $r_0$, intersecting at most once, 
$$|\int_{\gamma_1 \times \gamma_2} \cL| \leq C.$$
\end{lem}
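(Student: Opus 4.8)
The plan is to reduce the estimate to a local computation in geodesic normal coordinates and to exploit a cancellation in the leading, Euclidean part of the kernel. First I would dispose of the easy regime: if the two geodesics lie at distance at least some fixed $c_0 = c_0(g) > 0$ apart, then by the pointwise bound \eqref{eq:linkingFormBound} the integrand $\cL_{\gamma_1(s),\gamma_2(t)}(\dot\gamma_1,\dot\gamma_2)$ is bounded by $c_0^{-2}$ times the tangent-vector norms and, since both geodesics have length at most $r_0$, the integral is bounded by a constant depending only on $g$. So I may assume the two geodesics lie in a common geodesic ball of radius comparable to $r_0 \ll \mathrm{injrad}(g)$ and work in normal coordinates there.

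In these coordinates I would decompose $\cL = \cL^{0} + E$, where $\cL^{0}$ denotes the flat Gauss linking kernel $\tfrac{1}{4\pi}\,|p-q|^{-3}\langle p-q,\,\cdot\wedge\cdot\rangle$ and $E$ is the curvature correction. The main point is that $\cL^{0}$ contributes in a controlled way. When the two geodesics meet, I center the chart at the intersection point $p_0$; then both geodesics are straight segments through the origin, so parametrizing $\gamma_1(s)=su_1$ and $\gamma_2(t)=tu_2$ with constant unit tangents $u_1,u_2$, the kernel $\cL^{0}$ evaluated on $(u_1,u_2)$ is proportional to $\det(u_1,u_2,\,su_1-tu_2)=0$ and vanishes identically. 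When the geodesics are disjoint, $\cL^{0}$ no longer vanishes, but the flat Gauss integral between two arcs is given by a solid-angle (Gauss-map) formula and is bounded by a universal constant independent of their relative position and angle. Either way, the $\cL^{0}$-contribution is bounded by a constant depending only on $g$.

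For the correction $E$ I would use the normal-coordinate expansion $g_{ij}=\delta_{ij}+O(|x|^2)$, whose first derivatives vanish at the origin. Feeding this into the parametrix construction of the Green's form $\cG$ from \cite[Chapter V]{deRham}, the induced correction to the leading $\mathrm{dist}^{-1}$ singularity of $\cG$ is $O(\mathrm{dist})$, so that the correction $E=\star_R d_R(\cG-\cG^{0})$ to the $\mathrm{dist}^{-2}$ singularity of $\cL$ extends continuously across the diagonal, satisfying $\|E_{p,q}\|\lesssim_g 1$. Integrating this bound over $\gamma_1\times\gamma_2$ costs only the product of the lengths, giving a contribution at most $C(g)\,r_0^2$. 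Combining the two contributions yields the desired bound $|\int_{\gamma_1\times\gamma_2}\cL|\le C(g)$.

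The step I expect to be the main obstacle is the uniform control of the kernel through the nearly-tangent degeneration, i.e.\ as the angle between the two geodesics tends to zero. The naive estimate $\|\cL_{p,q}\|\lesssim \mathrm{dist}(p,q)^{-2}$ is not integrable over $\gamma_1\times\gamma_2$ uniformly in this angle, so the argument genuinely relies on (i) the exact vanishing of the flat integrand on geodesics through the common point, which removes the leading singularity precisely in the regime where the two curves nearly coincide, and (ii) upgrading the correction bound from the generic $O(\mathrm{dist}^{-1})$ to $O(1)$, which requires carefully tracking the second-order terms in the normal-coordinate and parametrix expansions and using that $\gamma_1,\gamma_2$ are geodesics, so that their acceleration is governed by Christoffel symbols vanishing at the origin. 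Making these two local estimates precise, and then assembling a single constant via a compactness argument over the space of admissible configurations bounded away from tangency, is the technical heart of the proof.
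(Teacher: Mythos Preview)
Your proposal follows essentially the same approach as the paper's sketch: reduce to the Euclidean case via geodesic normal coordinates, and use that the flat Gauss integrand vanishes identically on a pair of straight lines through a common point (the paper illustrates exactly this computation). The paper does not give a full proof but refers to \cite{Vogel03} and \cite{ArnoldKhesin}; your elaboration of the error-term analysis and the identification of the nearly-tangent degeneration as the key difficulty are consistent with, and more detailed than, what the paper provides.
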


Lemma \ref{lem:gaussLinkingGeodesics} is stated less formally in the proof of \cite[Theorem $5$]{Vogel03}, as well as in \cite{ArnoldKhesin}. It follows from an explicit computation in the case where the Riemannian manifold $(M, g)$ is $\bR^3$ with the Euclidean metric; the case of a general Riemannian metric on a closed $3$-manifold can be reduced to the Euclidean case by taking geodesic normal coordinates. Our main application, Proposition \ref{prop:LkOmegaRightHanded} below, will as a consequence of making some convenient assumptions on the Riemannian metric $g$ require only the computation in the Euclidean case. 

To illustrate the idea behind this computation for the reader, we discuss it in a simple case. When $M = \bR^3$ and $g$ is the Euclidean metric, the Gauss linking form is the double form
\begin{equation} \label{eq:euclideanLinkingForm} \cL_{p,q}(V, W) = \frac{1}{4\pi}\frac{\langle V, W \times (p - q)\rangle}{\|V - W\|^3}.\end{equation}

The ``$\times$'' symbol denotes in this case the cross product of vectors in $\bR^3$. Let $\gamma_1, \gamma_2: \bR \to \bR^3$ be two straight lines intersecting once. It follows that for any $s$ and $t$, the vector $\gamma_1(s) - \gamma_2(t)$ is a linear combination of $\dot{\gamma}_1(s)$ and $\dot{\gamma}_2(t)$, from which it follows that 
$$\langle \dot{\gamma}_1(s), \dot{\gamma}_2(t) \times (\gamma_1(s) - \gamma_2(t)) \rangle \equiv 0$$
which using (\ref{eq:euclideanLinkingForm}) implies that the integral of $\cL$ over the product of the two lines vanishes. 

\section{The linking of an invariant measure with the volume}

\subsection{The function $\Lk_\omega$} 

For every $X$-invariant probability measure $\mu \in \cP(X)$, choose a primitive $\nu$ of $\omega$ and define the quantity
$$\Lk_\omega(\mu) = \int_M \nu(X) d\mu.$$

\begin{lem}
For any $\mu \in \cP(X)$, the quantity $\text{Lk}_\omega(\mu)$ does not depend on the choice of primitive $\nu$ of $\omega$.
\end{lem}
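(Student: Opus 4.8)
The plan is to reduce the independence statement to the vanishing of the integral of a closed one-form evaluated on $X$ against an invariant measure, and then to exploit the fact that $M$ is a rational homology three-sphere. First I would take two primitives $\nu$ and $\nu'$ of $\omega$ and set $\alpha = \nu - \nu'$. Since $d\nu = d\nu' = \omega$, the one-form $\alpha$ is closed, and the difference between the two candidate values of $\Lk_\omega(\mu)$ is precisely $\int_M \alpha(X)\, d\mu$. Thus it suffices to prove that this integral vanishes for every $\mu \in \cP(X)$ and every closed one-form $\alpha$ on $M$.

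The key observation is that $M$ is a rational homology three-sphere, so $H^1(M; \bR) = 0$ and every closed one-form is exact. Writing $\alpha = df$ for a smooth function $f$, I would rewrite the integrand using $\alpha(X) = df(X) = X(f)$, the derivative of $f$ along the flow of $X$.

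The final step uses the $X$-invariance of $\mu$. Since $X(f) = \frac{d}{dt}\big|_{t=0}(f \circ \phi^t)$ and $\int_M (f \circ \phi^t)\, d\mu = \int_M f\, d\mu$ is constant in $t$ by invariance of $\mu$, differentiating under the integral sign at $t = 0$ yields $\int_M X(f)\, d\mu = 0$. Combining the three steps shows that $\int_M \nu(X)\, d\mu = \int_M \nu'(X)\, d\mu$, so $\Lk_\omega(\mu)$ is well defined.

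As for the main obstacle: there is essentially no analytic difficulty here, and the real content is the topological input $H^1(M; \bR) = 0$, without which a closed one-form need not be exact and the conclusion could genuinely fail. The only point requiring mild care is justifying the interchange of differentiation and integration in the last step, which is routine since $f$ is smooth and $\mu$ is a probability measure on the compact manifold $M$.
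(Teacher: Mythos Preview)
Your proof is correct and follows essentially the same approach as the paper: both use that $M$ being a rational homology three-sphere forces any two primitives of $\omega$ to differ by an exact one-form $df$, and then invoke $X$-invariance of $\mu$ to conclude $\int_M df(X)\,d\mu = 0$. Your write-up is slightly more explicit about justifying this last vanishing via differentiation under the integral sign, but the underlying argument is identical.
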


\begin{proof}
Since $M$ is a rational homology three-sphere, any two primitives $\nu_1$ and $\nu_2$ of $\omega$ differ by an exact one-form $df$ for $f \in C^\infty(M)$. Let $\mu$ be an $X$-invariant probability measure. Since it is $X$-invariant, we find for any smooth function $g: M \to \mathbb{R}$ that
$$\int_M dg(X) d\mu = 0.$$

We use this to show that the two versions of $\Lk_\omega$ for $\nu_1$ and $\nu_2$ coincide:
\begin{equation*}
    \int_M (\nu_1(X) - \nu_2(X)) d\mu = \int_M df(X) d\mu = 0.
\end{equation*}
\end{proof}

\subsection{Right-handed vector fields} \label{subsec:rightHanded} We now describe Ghys' notion of right-handedness of a non-singular vector field on a rational homology three-sphere. We use the definition given in \cite[Section $2.2$]{FlorioHryniewicz21}. 

Let $Y$ be a non-singular smooth vector field on $M$. Denote the flow of $Y$ by $\{\psi^t\}_{t \in \bR}$. Denote the set of recurrent points of the flow of $Y$ by $\cR(Y)$. There is an associated measurable subset
$$R(Y) = \{(p,q) \in \cR(Y) \times \cR(Y)\,|\,\psi^{\bR}(p) \cap \psi^{\bR}(q) = \emptyset\}$$
of $M \times M$. 

\subsubsection{Positive linking of ergodic invariant measures} Fix any two ergodic measures $\mu_1$ and $\mu_2$ in $\cP(Y)$. It follows by the ergodicity assumption and the definition of $R(Y)$ that one of the following two conditions hold. 

\textbf{Condition 1:} $(\mu_1 \times \mu_2)(R) = 1$. We define in this case the notion of \textbf{positive linking} of $\mu_1$ and $\mu_2$. Fix $(p, q) \in R(X)$ and let $\cS(p, q)$ denote the set of ordered pairs of monotonically increasing sequences $(\{S_n\}, \{T_n\})$ such that $S_n, T_n \to \infty$, $\psi^{S_n}(p) \to p$ and $\psi^{T_n}(q) \to q$. For $n \gg 1$ let $\alpha_n = \sigma(\psi^{S_n}(p), p)$ and $\beta_n = \sigma(\psi^{T_n}(q), q)$ be the shortest geodesic arcs from $\psi^{S_n}(p)$ to $p$ and from $\psi^{T_n}(q)$ to $q$, respectively. For each $n$, fix $C^1$-small perturbations $\hat{\alpha}_n$, $\hat{\beta}_n$ of $\alpha_n$ and $\beta_n$, respectively, so that the loops 
$$k(S_n, p) = \psi^{[0,S_n]}(p) \cup \hat{\alpha}_n$$ 
and 
$$k(T_n, q) = \psi^{[0,T_n]}(q) \cup \hat{\beta}_n$$ 
do not intersect. Define
$$\link_-(\psi^{[0,S_n]}(p), \psi^{[0,T_n]}(q)) = \liminf_{\hat{\alpha}_n \xrightarrow{C^1} \alpha_n, \hat{\beta}_n\xrightarrow{C^1}\beta_n} \link(k(S_n,p), k(T_n,q))$$
and
$$\ell(p,q) = \inf_{(\{S_n\},\{T_n\}) \in \cS(p,q)} \liminf_{n \to \infty} \frac{1}{S_n T_n}\link_-(\psi^{[0,S_n]}(p), \psi^{[0,T_n]}(q)).$$

We say that $\mu_1$ and $\mu_2$ are \textbf{positively linked} if there is a subset $E \subset R(Y)$ of full $(\mu_1 \times \mu_2)$-measure such that $\ell(p,q) > 0$ for every $(p, q) \in E$. 

\textbf{Condition 2:} $(\mu_1 \times \mu_2)(R) = 0$ and $\text{supp}(\mu_1) \cup \text{supp}(\mu_2) = \gamma$ for some periodic orbit $\gamma$. We say $\mu_1$ and $\mu_2$ are \textbf{positively linked} if the periodic orbit $\gamma$ has positive rotation number in a Seifert framing. 

\subsubsection{Definition of right-handedness} Given the definition of positive linking of pairs of ergodic measures above, we say that the vector field $Y$ is \textbf{right-handed} if any pair of ergodic measures $\mu_1$ and $\mu_2$ is positively linked. 

\subsection{$\Lk_\omega$ is positive for right-handed vector fields} The aim of this subsection is to show the following proposition.

\begin{prop} \label{prop:LkOmegaRightHanded}
Let $\mu \in \cP(X)$ be any $X$-invariant probability measure. If the vector field $X$ is right-handed then $\Lk_\omega(\mu) > 0$. 
\end{prop}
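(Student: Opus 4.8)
\emph{Overview.} The plan is to combine three ingredients: a reduction to ergodic measures using the affine structure of $\Lk_\omega$, a Hopf-type integral formula expressing $\Lk_\omega(\mu)$ through the Gauss linking form $\cL$, and an application of the multiparameter ergodic theorem that identifies this integral with the asymptotic linking numbers appearing in the definition of right-handedness.

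\emph{Reduction and the Hopf formula.} Fix a primitive $\nu$ of $\omega$. The functional $\mu \mapsto \Lk_\omega(\mu) = \int_M \nu(X)\,d\mu$ is affine and weak-$*$ continuous on the compact convex set $\cP(X)$, whose extreme points are precisely the ergodic measures; hence its minimum is attained at an ergodic measure, and it suffices to prove $\Lk_\omega(\mu) > 0$ for $\mu$ ergodic. To bring in the geometry I would apply Lemma~\ref{lem:linkingFormIdentity} to $\eta = \nu$, whose differential is $d\nu = \omega$. Evaluating the resulting identity on $X$, integrating against the invariant measure $\mu$, and discarding the exact error via $\int_M dh(X)\,d\mu = 0$ yields
\[ \Lk_\omega(\mu) = \int_M\int_M \cL_{p,q}\big(X(p),X(q)\big)\,\Omega(q)\,d\mu(p), \]
after unwinding $\omega = \iota_X\Omega$ in the right factor; the integrability and continuity of $\int_M \|\cL_{p,q}\|_R\,\Omega(q)$ justify Fubini. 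Writing $F(p,q) = \cL_{p,q}(X(p),X(q))$ and $\Lk(\mu_1,\mu_2) = \int\int F\,d\mu_1\,d\mu_2$, this reads $\Lk_\omega(\mu) = \Lk(\mu,\Omega)$.

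\emph{Ergodic decomposition of the volume.} Let $\Omega = \int \mu_2\,d\kappa(\mu_2)$ be the ergodic decomposition. Since $\int_M\int_M |F|\,\Omega\,d\mu < \infty$, Fubini gives $F \in L^1(\mu\times\mu_2)$ for $\kappa$-a.e.\ $\mu_2$ together with $\Lk_\omega(\mu) = \int \Lk(\mu,\mu_2)\,d\kappa(\mu_2)$, so it is enough to show $\Lk(\mu,\mu_2) > 0$ for $\kappa$-a.e.\ $\mu_2$. Each such ergodic pair $(\mu,\mu_2)$ is subject to the dichotomy of Section~\ref{subsec:rightHanded}, and I would first rule out Condition~$2$ for $\kappa$-a.e.\ $\mu_2$: it would force both measures onto a single periodic orbit $\gamma$, requiring $\Omega(\gamma) > 0$, which is impossible for a smooth volume on a one-dimensional set. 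Hence $\kappa$-a.e.\ pair $(\mu,\mu_2)$ satisfies Condition~$1$, and right-handedness supplies a full-measure set $E \subset R(X)$ on which $\ell(p,q) > 0$.

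\emph{Identification with the asymptotic linking.} Fix such a $\mu_2$. The product flow $\phi^s\times\phi^t$ preserves $\mu\times\mu_2$, and this $\bR^2$-action is ergodic because $\mu$ and $\mu_2$ are individually ergodic. For $(\mu\times\mu_2)$-a.e.\ $(p,q)$, Poincar\'e recurrence for the diagonal flow $\phi^t\times\phi^t$ produces \emph{simultaneous} return times $t_n\to\infty$ with $\phi^{t_n}(p)\to p$ and $\phi^{t_n}(q)\to q$, giving an element of $\cS(p,q)$ with $S_n = T_n = t_n$. The square averages of $F$ converge by the Wiener--Tempelman ergodic theorem, valid in $L^1$ along the regular F\o lner sequence of squares $[0,t_n]^2$:
\[ \frac{1}{t_n^2}\int_0^{t_n}\!\!\int_0^{t_n} F\big(\phi^s p,\phi^t q\big)\,ds\,dt \;\longrightarrow\; \Lk(\mu,\mu_2), \]
and the left side equals $t_n^{-2}\int_{\phi^{[0,t_n]}(p)\times\phi^{[0,t_n]}(q)}\cL$ by Lemma~\ref{lem:gaussLinking}. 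Combining this with the definitions of $\link_-$ and $\ell$, and with the closing-arc claim below, I obtain $t_n^{-2}\link_-(\phi^{[0,t_n]}(p),\phi^{[0,t_n]}(q)) \to \Lk(\mu,\mu_2)$. As $\ell(p,q)$ is an infimum over sequences in $\cS(p,q)$, this forces $\ell(p,q) \le \Lk(\mu,\mu_2)$, and $\ell > 0$ on $E$ gives $\Lk(\mu,\mu_2) > 0$; integrating against $\kappa$ finishes the proof.

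\emph{Main obstacle.} The crux is the closing-arc estimate
\[ \Big|\int_{\phi^{[0,t_n]}(p)\times\hat\beta_n}\cL\Big| + \Big|\int_{\hat\alpha_n\times\phi^{[0,t_n]}(q)}\cL\Big| + \Big|\int_{\hat\alpha_n\times\hat\beta_n}\cL\Big| = o(t_n^2). \]
The last term is $O(1)$ by Lemma~\ref{lem:gaussLinkingGeodesics}, since $\hat\alpha_n,\hat\beta_n$ are short geodesic arcs. The cross terms are delicate: a long orbit of length $t_n$ is paired against a single short arc localized near $q$. The plan is to cut the orbit into unit-length pieces, approximate each by a short geodesic, and apply Lemma~\ref{lem:gaussLinkingGeodesics} to each pairing with $\hat\beta_n$; recurrence statistics bound the number of pieces passing within bounded distance of $q$ by $O(t_n)$, while distant pieces contribute negligibly through the decay $\|\cL_{p,q}\| \lesssim \text{dist}(p,q)^{-2}$, producing the bound $O(t_n) = o(t_n^2)$. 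Controlling the geodesic approximation uniformly is exactly where the convenient assumptions on $g$ (reducing $\cL$ to its Euclidean model) enter, and making this estimate rigorous is the principal technical difficulty.
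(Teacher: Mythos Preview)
Your proposal is correct and follows essentially the same route as the paper: reduce to ergodic $\mu$, express $\Lk_\omega(\mu)$ via the Gauss linking form and Lemma~\ref{lem:linkingFormIdentity}, pass to an ergodic component of the volume (the paper calls it $\eta$, you call it $\mu_2$), apply Tempelman's multiparameter ergodic theorem, and then compare with $\ell(p,q)>0$ modulo a closing-arc estimate handled through Lemma~\ref{lem:gaussLinkingGeodesics} and the Euclidean flow-box charts. Your sketch of the cross-term bound (cut the long orbit into $O(t_n)$ unit pieces and apply Lemma~\ref{lem:gaussLinkingGeodesics} piecewise) is exactly what the paper does in its Step~5.

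One genuine difference is worth flagging. By observing that the ergodic components of the \emph{volume} are $\kappa$-almost never supported on a periodic orbit, you arrange that Condition~1 holds for $\kappa$-a.e.\ pair $(\mu,\mu_2)$ regardless of whether $\mu$ itself sits on a periodic orbit, and then run the ergodic-theorem argument uniformly. The paper instead singles out the case where $\mu$ is supported on a periodic orbit $\gamma$ (its Step~6) and handles it by invoking Ghys' Theorem~\ref{thm:ghys} to produce a global surface of section with boundary $\gamma$, followed by Stokes. Your route is lighter in that it avoids appealing to Theorem~\ref{thm:ghys}; the price is that the closing-arc estimate must still go through when $p$ is periodic and when the diagonal return time $t_n$ does not land both endpoints on prescribed transversal disks. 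The paper's freedom to choose $S_n$ and $T_n$ independently, each hitting a fixed transversal, is precisely what makes its version of the estimate tidiest, so if you carry your variant out in full you should expect a little extra care at that point.
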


We use Lemma \ref{lem:linkingFormIdentity} to compute $\Lk_\omega$ in terms of the Gauss linking form. Let $\cL$ be the Gauss linking form from Section \ref{sec:gaussLinking}. For any pair of vector fields $Y_1$ and $Y_2$, we will use the notation $\cL(Y_1, Y_2)$ to denote the contraction of the bidegree $(1,1)$ part of the double form $\cL$ with $Y_1$ and $Y_2$. Lemma \ref{lem:lkOmegaComputation} is essentially contained in the work of Kotschick--Vogel \cite{KotschickVogel}, using the correspondence between invariant measures $\mu \in \cP(X)$ and Ruelle-Sullivan cycles for $X$. 

\begin{lem}
\label{lem:lkOmegaComputation}
For any $X$-invariant probability measure $\mu$, $\cL(X, X)$ is integrable with respect to the product measure $d\mu \times \Omega$ and 
$$\Lk_\omega(\mu) = \int_{M \times M} \cL_{p,q}(X, X) (d\mu \times \Omega)(p,q).$$

\end{lem}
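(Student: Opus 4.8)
The plan is to derive the stated formula from Vogel's identity (Lemma \ref{lem:linkingFormIdentity}) applied to a primitive of $\omega$, and then to reduce the resulting wedge-integral to the contraction $\cL(X,X)$ by a pointwise linear-algebra identity. Since $M$ is a rational homology three-sphere, $H^2_{dR}(M) = 0$, so I may fix a \emph{smooth} primitive $\nu$ of the closed two-form $\omega$; this is the primitive used to define $\Lk_\omega(\mu)$. Applying Lemma \ref{lem:linkingFormIdentity} with $\eta = \nu$ (so that $d\eta = \omega$) produces a smooth function $h$ with
$$\nu(p) - \int_M \omega^R_q \wedge \cL_{p,q} = dh(p)$$
for every $p \in M$.

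The next step is to contract this identity with $X$ at $p$ and integrate against $\mu$. On the right, $\int_M dh(X)\, d\mu = 0$ because $\mu$ is $X$-invariant, exactly as in the preceding lemma. On the left, the first term integrates to $\Lk_\omega(\mu)$ by definition. It then remains to identify the double-form term. After contracting the bidegree-$(1,1)$ part of $\cL_{p,q}$ with $X(p)$, the integrand becomes $\omega_q \wedge \beta_q$, where $\beta_q = \cL_{p,q}(X(p), -)$ is a one-form in the $q$-variable; note that $\omega^R_q$ carries no $p$-components, so the contraction with $X(p)$ passes through the wedge without obstruction. Here I would invoke the pointwise identity $\iota_X\Omega \wedge \beta = \beta(X)\,\Omega$, valid for any one-form $\beta$ and vector field $X$ on a three-manifold. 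Since $\omega = \Omega(X, -) = \iota_X\Omega$, this yields $\omega_q \wedge \beta_q = \cL_{p,q}(X,X)\,\Omega(q)$, so the inner integral equals $\int_M \cL_{p,q}(X,X)\,\Omega(q)$, giving $\Lk_\omega(\mu) = \int_M\big(\int_M \cL_{p,q}(X,X)\,\Omega(q)\big)\,d\mu(p)$.

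To conclude I would promote this iterated integral to the double integral over $M \times M$ via Fubini, which simultaneously settles the asserted integrability. Using $\|X\|_g \equiv 1$ one bounds $|\cL_{p,q}(X,X)| \leq \|\cL_{p,q}\|_R$, and by the property of $\cL$ recorded in Section \ref{sec:gaussLinking} the function $p \mapsto \int_M \|\cL_{p,q}\|_R\,\Omega(q)$ exists and is continuous, hence bounded on the compact manifold $M$. Thus $\int_{M\times M} |\cL_{p,q}(X,X)|\,(d\mu \times \Omega) < \infty$, so Tonelli guarantees absolute integrability and Fubini justifies both the exchange of integration order and the passage to the double integral, yielding the claimed formula.

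The main obstacle I anticipate is the bidegree bookkeeping for double forms in the second step — correctly tracking how contraction with $X(p)$ passes through the wedge $\omega^R_q \wedge \cL_{p,q}$ and verifying the sign in the reduction to $\cL(X,X)\,\Omega$ — together with the integrability estimate needed to invoke Fubini across the $\text{dist}(p,q)^{-2}$ singularity of $\cL$ along the diagonal.
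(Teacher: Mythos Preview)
Your proposal is correct and follows essentially the same route as the paper: apply Lemma~\ref{lem:linkingFormIdentity} to a primitive $\nu$ of $\omega$, kill the $dh$ term using $X$-invariance of $\mu$, reduce $\omega^R \wedge \cL$ to $\cL(X,X)\,\Omega$ via the pointwise identity $\iota_X\Omega \wedge \beta = \beta(X)\,\Omega$, and justify Fubini using the continuity in $p$ of $\int_M \|\cL_{p,q}\|_R\,\Omega(q)$ from Section~\ref{sec:gaussLinking}. Your anticipated obstacles (bidegree bookkeeping, the sign in the reduction, integrability across the diagonal) are all handled correctly in your outline and pose no real difficulty.
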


\begin{proof}
Observe that for any exact one-form $dh$, the fact that $\mu$ is an invariant probability measure implies that
$$\int_M dh(X) d\mu = 0.$$

Then the lemma follows from Lemma \ref{lem:linkingFormIdentity}: 
\begin{align*}
    \Lk_\omega(\mu) &= \int_M (\iota_X \nu)_p d\mu(p) \\
    &= \int_M \iota_X\big(\int_M \omega_q \wedge \cL_{p,q}) d\mu(p) \\
    &= \int_M \iota_X \big(\int_M \cL_{p,q}(-, X) \Omega(q)\big) d\mu(p) \\
    &= \int_M \big(\int_M \cL_{p,q}(X, X)\Omega(q)\big) d\mu(p).
\end{align*}

For any fixed $p \in M$, the function $\cL_{p,q}(X, X)$ is integrable with respect to the measure $\Omega(q)$. From what was said in Section \ref{sec:gaussLinking}, the function
$$\int_M |\cL_{p,q}(X, X)| \Omega(q)$$
varies continuously in $p$. The lemma then follows from Fubini's theorem and the prior computation. 
\end{proof}

We are now equipped to prove Proposition \ref{prop:LkOmegaRightHanded}. This will use the above computation, Tempelman's multiparameter pointwise ergodic theorem (\cite{Tempelman72}, \cite[Chapter $6$]{TempelmanBook}), and the fact that the integral of the linking form computes linking numbers of loops. For the purposes of computation, we will assume that the Riemannian metric $g$ takes on the following form. Fix some positive constant $r_0 \ll 1$. Write $(\overline{B}_{r_0}(0), g_{\text{Euc}})$ for the closed Euclidean ball in $\mathbb{R}^3$ of radius $r_0$. Then we assume that there are isometric embeddings
$$\iota_1,\,\iota_2: (\overline{B}_{r_0}(0), g_{\text{Euc}}) \hookrightarrow (M, g)$$
with disjoint images such that
$$(\iota_i)_*(\partial_z) = X$$
for $i \in \{1,2\}$, where we write the coordinates on $\mathbb{R}^3$ as $(x, y, z)$. 

\begin{proof}[Proof of Proposition \ref{prop:LkOmegaRightHanded}]
Suppose $X$ is right-handed. We will assume for the sake of contradiction that $\Lk_\omega(\mu) \leq 0$ for some $\mu \in \cP(X)$. The proof then proceeds in $6$ steps.

\textbf{Step 1:} This step reduces the proposition to the case where $\mu$ is ergodic. The ergodic decomposition theorem shows that if 
$$\Lk_\omega(\mu) = \int_M \nu(X) d\mu \leq 0,$$ for some $\mu \in \cP(X)$ then there must be some ergodic probability invariant measure $\mu_*$ such that $\Lk_\omega(\mu_*) \leq 0$. Therefore, we assume without loss of generality in the following steps that there is an \emph{ergodic} measure $\mu \in \cP(X)$ such that $\Lk_\omega(\mu) \leq 0$ and derive a contradiction, which will prove the proposition. We will split up the proof into two cases and derive a contradiction in each case. The first case, which is the subject of Steps $2$ through $5$, assumes that $\mu$ does not have support in a periodic orbit of $X$. The second case, which is the subject of Step $6$, assumes that $\mu$ is supported in a periodic orbit of $X$. 

\textbf{Step 2:} Our assumptions and the computation from Lemma \ref{lem:lkOmegaComputation} imply the inequality
$$\int_{M \times M} \cL_{p,q}(X, X) (d\mu \times \Omega)(p, q) \leq 0.$$

It is also important to note that $\cL_{p,q}(X, X) \in L^1(M \times M; \mu \times \Omega)$. 

\textbf{Step 3:} Another application of the ergodic decomposition theorem to the inequality from the previous step produces an ergodic invariant probability measure $\eta \in \cP(X)$ such that $\cL_{p,q}(X, X) \in L^1(M \times M, \mu \times \eta)$ and 
$$\int_{M \times M} \cL_{p,q}(X, X) (d\mu \times d\eta)(p, q) \leq 0.$$

Write $R = R(X)$ for the set introduced in Section \ref{subsec:rightHanded}. Since we are assuming that $\mu$ does not have support contained in a periodic orbit, it follows that $(\mu \times \eta)(R) = 1$. Since $X$ is right-handed the measures $\mu$ and $\eta$ are ``positively linked'' in the sense described in Condition $1$ in Section \ref{subsec:rightHanded}. Tempelman's multiparameter pointwise ergodic theorem (\cite{Tempelman72}, \cite[Corollary $6.3.3$]{TempelmanBook}) tells us that the pointwise limit
$$\lim_{S, T \nearrow \infty} \frac{1}{ST} \int_{\phi^{[0,S]}(p) \times \phi^{[0,T]}(q)} \cL = \int_{M \times M} \cL_{p,q}(X, X)(d\mu \times d\eta)(p, q)$$
exists almost everywhere. Here the notation ``$\nearrow$'' indicates that $S$ and $T$ are taken to be monotonically increasing. 

The prior inequality tells us that there is a Borel set $E \subset M \times M$ such that $(\mu \times \eta)(E) = 1$ and for any $(p, q) \in E$, the limit
$$\lim_{S, T \nearrow \infty} \frac{1}{ST} \int_{\phi^{[0,S]}(p) \times \phi^{[0,T]}(q)} \cL$$
exists and is non-positive. 

\textbf{Step 4:} In Steps $4$ and $5$, we use the inequality from Step $3$ and the fact that the ergodic measures $\mu$ and $\eta$ are positively linked to derive a contradiction. Recall the quantity $\ell(p,q)$ defined in Section \ref{subsec:rightHanded}. Since $\mu$ and $\eta$ are positively linked, we can choose a pair of points $(p, q) \in E \cap R$ such that $\ell(p,q) > 0$. Moreover, we can assume without loss of generality that, after possibly shrinking $r_0$ slightly, 
$$p = \iota_1(0)$$
and
$$q = \iota_2(0)$$
where $\iota_1$ and $\iota_2$ are the isometric embeddings of Euclidean balls $(\overline{B}_{r_0}(0), g_{\text{Euc}})$ fixed prior to the beginning of the proof of the proposition. 

It follows from the prior step that for any pair $(\{S_n\}, \{T_n\}) \in \mathcal{S}(p,q)$, we have 
\begin{equation} \label{eq:lkOmegaRightHanded1} \lim_{n \to \infty} \frac{1}{S_n T_n} \int_{\phi^{[0,S_n]}(p) \times \phi^{[0,T_n]}(q)} \cL \leq 0. \end{equation}

For any $n$, let $\alpha_n$ and $\beta_n$ be the shortest geodesic arcs from $\phi^{S_n}(p)$ to $p$ and from $\phi^{T_n}(q)$ to $q$, respectively. Choose for any $n$ small $C^1$ perturbations $\hat{\alpha}_n$ of $\alpha_n$ and $\hat{\beta}_n$ of $\beta_n$ such that the loops $k(S_n, p)$ and $k(T_n, q)$ given by closing up the flow lines $\phi^{[0,S_n]}(p)$ and $\phi^{[0,T_n]}(q)$ by $\hat{\alpha}_n$ and $\hat{\beta}_n$ are disjoint. Then using the fact that $\ell(p, q) > 0$, then it follows for any $n$ there is a constant $\epsilon_n = \epsilon_n(p, q, \{S_n\}, \{T_n\}) > 0$ depending on $n$, the pair $(p, q)$, and the pair $(\{S_n\}, \{T_n\})$ such that if 
$$\text{dist}_{C^1}(\alpha_n, \hat{\alpha}_n) + \text{dist}_{C^1}(\beta_n, \hat{\beta}_n) \leq \epsilon_n$$
for every $n$, we find 
$$\liminf_{n \to \infty} \frac{1}{S_n T_n}\text{link}(k(S_n, p), k(T_n, q)) > 0.$$

Observe by Lemma \ref{lem:gaussLinking} that for any $n$,
$$\text{link}(k(S_n, p), k(T_n, q)) = \int_{k(S_n, p) \times k(T_n, q)} \cL.$$

We conclude that 
\begin{equation} \label{eq:lkOmegaRightHanded2} \liminf_{n \to \infty} \frac{1}{S_n T_n}\int_{k(S_n, p) \times k(T_n, q)} \cL > 0\end{equation}
where we continue to assume that the $C^1$ perturbations are taken to be of size at most $\epsilon_n$ for any $n$. 

Subtracting (\ref{eq:lkOmegaRightHanded1}) from (\ref{eq:lkOmegaRightHanded2}) yields the inequality
\begin{equation}
    \label{eq:lkOmegaRightHanded3} \liminf_{n \to \infty} \frac{1}{S_n T_n} \big(\int_{\phi^{[0,S_n]}(p) \times \hat{\beta}_n} + \int_{\hat{\alpha}_n \times \phi^{[0,T_n]}(q)} + \int_{\hat{\alpha}_n \times \hat{\beta}_n}\big) \cL > 0.
\end{equation}

We will derive a contradiction by showing that, for some suitable choice of pair $(\{S_n\}, \{T_n\}) \in \cS(p,q)$ and subsequent choice of approximations $\hat{\alpha}_n$ and $\hat{\beta}_n$, there is some uniform constant $C \geq 1$ such that
\begin{gather}
    \label{ineq:contradiction1} |\int_{\phi^{[0,S_n]}(p) \times \hat{\beta}_n} \cL| \leq CS_n, \\
    \label{ineq:contradiction2} |\int_{\hat{\alpha}_n \times \phi^{[0,T_n]}(q)} \cL| \leq CT_n, \\
    \label{ineq:contradiction3} |\int_{\hat{\alpha}_n \times \hat{\beta}_n} \cL| \leq C.
\end{gather}

The bounds (\ref{ineq:contradiction1}--\ref{ineq:contradiction3}) are incompatible with the inequality (\ref{eq:lkOmegaRightHanded3}), which leads to a contradiction and therefore a proof of the proposition in the case where the measure $\mu$ is ergodic with support not contained in a periodic orbit. Note that the bound (\ref{ineq:contradiction3}) is straightforward using the pointwise bound (\ref{eq:linkingFormBound}) for the linking form, since by definition for sufficiently large $n$ the distance between $\hat{\alpha}_n$ and $\hat{\beta}_n$ is bounded below by $\text{dist}(p,q)/2$. 

\textbf{Step 5:} This step proves the inequality (\ref{ineq:contradiction1}). The argument for the inequality (\ref{ineq:contradiction2}) is identical after cosmetic changes, in particular replacing all uses of the embedding $\iota_2$ with the embedding $\iota_1$.  

Recall from the choice of $(p,q)$ in Step $4$ that there are isometric embeddings
$$\iota_1,\,\iota_2: (B_{r_0}(0), g_{\text{Euc}}) \hookrightarrow (M, g)$$
such that
\begin{itemize}
    \item $\iota_1(0) = p$ and $\iota_2(0) = q$.
    \item $(\iota_i)_*(\partial_z) = X$ for $i \in \{1,2\}$. 
\end{itemize}

For any positive $r \leq r_0$, we write $D_{r} = \{ (x, y, 0) \in B_{r_0}(0)\,|\,|x|^2 + |y|^2 < r^2\}$ for the open disk of radius $r$ in the $xy$-plane. We now fix the pair $(\{S_n\}, \{T_n\}) \in \cS(p,q)$ to be such that for any $n$, the point $\phi^{T_n}(q)$ lies in the embedded disk $\iota_2(D_{r_0/4} \times \{0\})$. For every $n$, write $(x_n, y_n, 0)$ for the unique point in $D_{r_0/4}$ such that
$$\iota_2(x_n, y_n, 0) = \phi^{T_n}(q).$$

It follows that the unique length-minimizing geodesic $\beta_n$ from $\phi^{T_n}(q)$ to $q$ is the composition of $\iota_2$ with the radial path
$$t \mapsto ((1-t)x_n, (1-t)y_n, 0)$$
for $t \in [0,1]$. Let $\gamma_n$ be any segment of the flow-line $\phi^{[0,S_n]}(p)$ of length $2r_0$. If $\gamma_n$ is disjoint from the image of the embedding $\iota_2$, it follows from the bound (\ref{eq:linkingFormBound}) that the integral of the linking form on $\gamma_n \times \beta_n$ is uniformly bounded in $n$:
\begin{equation} \label{ineq:outsideBound} |\int_{\gamma_n \times \beta_n} \cL| \lesssim 1. \end{equation}

We now only need to bound the integral of $\gamma_n \times \beta_n$ when $\gamma_n$ is not disjoint from the image of $\iota_2$. To do so, it suffices to consider the case where $\gamma_n$ is the composition of $\iota_2$ with the curve
$$t \mapsto (x, y, t)$$
for $t \in (-r_0, r_0)$ and some fixed $(x, y, 0) \in D_{r_0/2}$. This is because, in the coordinates given by $\iota_2$, the flow-lines of $X$ are flow-lines of the coordinate vector field $\partial_z$.

Fix any $(x, y, 0)$ in $D_{r_0/2}$ and set $\gamma_{x,y}$ to be the composition of $\iota_2$ with the curve
$$t \mapsto (x, y, t)$$
for $t \in (-r_0, r_0)$. Since the metric is Euclidean in the image of the embedding $\iota_2$, both $\gamma_{x,y}$ and $\beta_n$ are short geodesics. We also observe that $\gamma_{x,y}$ and $\beta_n$ intersect at most once, at a right angle. It then follows from Lemma \ref{lem:gaussLinkingGeodesics} that the integral of $\cL$ over $\gamma_{x,y} \times \beta_n$ is bounded independently of the choice of $(x, y)$ or $n$:
\begin{equation} \label{ineq:flowBoxBound} |\int_{\gamma_{x,y} \times \beta_n} \cL| \lesssim 1. \end{equation}

The inequalities (\ref{ineq:outsideBound}) and (\ref{ineq:flowBoxBound}) suffice to show the inequality (\ref{ineq:contradiction1}). This is because the flow-line $\phi^{[0,S_n]}$ is covered by $\lesssim S_n$ segments of size $2r_0$. We split up the integral on the left-hand side of (\ref{ineq:contradiction1}) along this cover and deduce the desired inequality by application of (\ref{ineq:outsideBound}) or (\ref{ineq:flowBoxBound}) to the integral corresponding to each segment. 

\textbf{Step 6:} We now address the case where $\mu$ is ergodic and supported in a periodic orbit $\gamma$ of the vector field $X$ of minimal period $S > 0$. It follows that 
$$\Lk_\omega(\mu) = \frac{1}{S}\int_\gamma \nu$$
and, by our assumption, 
$$\int_\gamma \nu \leq 0$$
where $\nu$ is any primitive of $\omega$. Theorem \ref{thm:ghys} implies that $\gamma$ bounds a global surface of section $\Sigma$. The orientation on $\Sigma$ inducing the orientation by $X$ on $\gamma$ coincides with the orientation induced by the ambient orientation on $M$ and the co-orientation of $\Sigma \setminus \gamma$ by $X$, see the convention in \cite[Remark $1.4$]{FlorioHryniewicz21}. Then $\Sigma$ is positively transverse to $X$ at any point not on its boundary, and it follows that the two-form $\omega$ is pointwise positive on the tangent plane at any non-boundary point of $\Sigma$. We then conclude using Stokes theorem the inequality
$$0 < \int_\Sigma \omega = \int_\gamma \nu \leq 0$$
and therefore arrive at a contradiction. 
\end{proof}

\begin{rem}
A more elegant proof of Proposition \ref{prop:LkOmegaRightHanded} can be derived from the results of Ghys' original paper \cite{Ghys09} on right-handed vector fields. Ghys demonstrates the existence of a ``universal linking form'' $\overline{\cL}$, a double form such that
\begin{itemize}
    \item $\overline{\cL} = \cL + d_L d_R f$, where $f$ is some smooth function on $M \times M$. 
    \item The function $\overline{\cL}(X, X)$ is smooth and pointwise positive on the complement of the diagonal in $M \times M$.
\end{itemize} 

Then Proposition \ref{prop:LkOmegaRightHanded} is immediate from the properties of $\overline{\cL}$ and the computation in Lemma \ref{lem:lkOmegaComputation}. We present a different proof above to avoid delving into the details of Ghys' linking form for invariant measures. We also note that, following the approach of Kotschick--Vogel \cite{KotschickVogel}, one should be able to show that the linking number $\Lk_\omega(\mu)$ is equal to $\Lk_{\text{Ghys}}(\mu, \Omega)$, where we use $\Lk_{\text{Ghys}}$ to denote Ghys' linking form. This would provide a third proof of Proposition \ref{prop:LkOmegaRightHanded}, since by definition $\Lk_{\text{Ghys}}(\mu, \Omega) > 0$ when the vector field $X$ is right-handed. 
\end{rem}

\section{Proof of Theorems \ref{thm:mainTechnical} and \ref{thm:main}}

We will use McDuff's contact-type criterion \cite{McDuff87} to deduce Theorem \ref{thm:mainTechnical}, and as a consequence Theorem \ref{thm:main}. The two-form $\omega$ on $M$ is said to be \textbf{contact-type} if there is a primitive $\nu$ such that $\nu \wedge \omega > 0$. The contact-type criterion gives a condition on the structure boundaries of $X$ for $\omega$ to be contact-type. A \textbf{structure boundary}\footnote{These are typically called \textbf{structure cycles}, but since $M$ is a rational homology $3$-sphere every structure cycle is a structure boundary.} is a $1$-dimensional current $X \otimes \mu$, where $\mu$ is an $X$-invariant measure, defined by
$$(X \otimes \mu)(\alpha) = \int_M \alpha(X) d\mu$$
where $\alpha$ is any smooth $1$-forms. Then McDuff's contact-type criterion states that $\omega$ is contact-type if and only if there is a primitive $\nu$ of $\omega$ such that for any structure boundary $X \otimes \mu$, we have
$$(X \otimes \mu)(\nu) \neq 0.$$

Observe that since $\mu$ is an $X$-invariant measure, this criterion does not depend on the choice of primitive $\nu$. Indeed by definition we have
$$(X \otimes \mu)(\nu) = \Lk_\omega(\mu).$$

Therefore, McDuff's contact-type criterion in our setting can be phrased as the following theorem.

\begin{thm}\label{thm:mcduff}
\cite{McDuff87} Let $X$ be a non-singular volume-preserving vector field on a closed, oriented rational homology three-sphere $M$ with volume form $\Omega$. Then the two-form $\omega = \Omega(X, -)$ is contact-type if and only if $\Lk_\omega(\mu) \neq 0$ for any $X$-invariant probability measure $\mu$.
\end{thm}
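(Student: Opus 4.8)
The plan is to treat Theorem~\ref{thm:mcduff} as a translation of McDuff's general contact-type criterion \cite{McDuff87} into the language of invariant measures, so that the substantive analytic input is quoted from \cite{McDuff87} while the remaining work is matching the objects on both sides. The computational heart is the pointwise identity
\[
\nu \wedge \omega = \nu(X)\,\Omega,
\]
valid for any primitive $\nu$ of $\omega = \iota_X \Omega$. First I would verify this by applying $\iota_X$ to the four-form $\nu \wedge \Omega$, which vanishes identically on the three-manifold $M$; the Leibniz rule gives $0 = \iota_X(\nu \wedge \Omega) = \nu(X)\,\Omega - \nu \wedge \omega$. Because $\Omega$ is a positive volume form, this shows that $\nu \wedge \omega > 0$ holds if and only if $\nu(X) > 0$ pointwise. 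Thus $\omega$ is contact-type exactly when some primitive $\nu$ is positive along $X$ everywhere, which is the formulation to which McDuff's criterion applies.

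With this reduction in hand, the forward implication is immediate: if $\nu(X) > 0$ everywhere, then for every $\mu \in \cP(X)$ we have $\Lk_\omega(\mu) = \int_M \nu(X)\,d\mu > 0$, so in particular $\Lk_\omega(\mu) \neq 0$. The converse is the content of McDuff's theorem, and here I would first record the dictionary that makes the two statements coincide. The structure boundaries $X \otimes \mu$ are precisely the Ruelle--Sullivan currents of $X$-invariant probability measures, so the family of structure cycles appearing in \cite{McDuff87} is exactly $\{X \otimes \mu : \mu \in \cP(X)\}$ up to positive rescaling; that every structure cycle is in fact a boundary uses only that $M$ is a rational homology sphere, as noted in the footnote. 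Under this dictionary the pairing $(X \otimes \mu)(\nu) = \Lk_\omega(\mu)$ is the evaluation of a primitive on a structure cycle, which is independent of the chosen primitive since $H^1(M;\bR) = 0$ forces any two primitives to differ by an exact form killed by the invariant measure.

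For the converse proper I would argue as follows. The space $\cP(X)$ is convex, hence connected, and $\mu \mapsto \Lk_\omega(\mu)$ is continuous and affine for the weak-$*$ topology, so if it never vanishes it has constant sign on $\cP(X)$. Fixing a reference primitive $\nu_0$, the existence of a primitive positive along $X$ is equivalent to asking whether $\nu_0(X)$ can be made everywhere positive by adding a coboundary $X(h) = \iota_X\,dh$, and the average of $\nu_0(X) + X(h)$ against any $\mu \in \cP(X)$ equals $\Lk_\omega(\mu)$ regardless of $h$. This is precisely the situation resolved by McDuff's convex-integration argument: positivity on every structure cycle produces a genuine primitive that is pointwise positive, which by the identity above is the desired contact primitive.

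The step I expect to be the main obstacle is this last passage, from ``positive on every invariant measure'' to ``pointwise positive after correction by a coboundary'', since it is a genuine duality/convex-integration statement rather than a formal manipulation; my proposal is to quote it from \cite{McDuff87} rather than reprove it. A secondary point requiring care is sign bookkeeping: the identity $\nu \wedge \omega = \nu(X)\,\Omega$ ties contact-type to the specific orientation carried by $\Omega$, so I would check that the constant sign produced by the connectedness argument is the positive one in the application (guaranteed once Proposition~\ref{prop:LkOmegaRightHanded} supplies $\Lk_\omega(\mu) > 0$), and flag that the clean ``$\neq 0$'' formulation is available precisely because $\cP(X)$ is connected.
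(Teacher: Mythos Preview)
Your proposal is correct and matches the paper's approach: the paper does not give a standalone proof of Theorem~\ref{thm:mcduff} but presents it as a direct rephrasing of McDuff's contact-type criterion after observing that structure boundaries are exactly the currents $X\otimes\mu$ with $(X\otimes\mu)(\nu)=\Lk_\omega(\mu)$. Your write-up supplies slightly more detail (the identity $\nu\wedge\omega=\nu(X)\,\Omega$ and the connectedness-of-$\cP(X)$ sign argument), but the substance---quote \cite{McDuff87} and match the dictionary---is identical.
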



We now finish the proofs of Theorems \ref{thm:mainTechnical} and \ref{thm:main}. 

\begin{proof}[Proof of Theorem \ref{thm:mainTechnical}]
Combine Proposition \ref{prop:LkOmegaRightHanded} and Theorem \ref{thm:mcduff}. 
\end{proof}

\begin{proof}[Proof of Theorem \ref{thm:main}]
Theorem \ref{thm:mainTechnical} shows that $\omega$ is contact-type. Therefore, it admits a primitive $\nu$ such that $\nu \wedge \omega > 0$. The Reeb vector field $R$ of $\nu$ is the \emph{unique} vector field such that 
$$\nu(R) \equiv 1 \qquad \qquad \omega(R, -) \equiv 0.$$

Observe that since $\nu \wedge \omega > 0$, the smooth function $\nu(X)$ is everywhere positive. If we set $f = \nu(X)^{-1}$, we compute
$$\nu(fX) \equiv 1 \qquad \qquad \omega(fX, -) \equiv 0.$$

It follows that $R = fX$ as desired. 
\end{proof}

\bibliographystyle{alpha}
\bibliography{main}

\begin{thebibliography}{CGHP19}

\bibitem[AK21]{ArnoldKhesin}
Vladimir~I. Arnold and Boris~A. Khesin.
\newblock {\em Topological methods in hydrodynamics}, volume 125 of {\em
  Applied Mathematical Sciences}.
\newblock Springer, Cham, [2021] \copyright 2021.
\newblock Second edition [of 1612569].

\bibitem[CGH16]{CGH}
Daniel Cristofaro-Gardiner and Michael Hutchings.
\newblock From one {R}eeb orbit to two.
\newblock {\em J. Differential Geom.}, 102(1):25--36, 2016.

\bibitem[CGHP19]{CGHP}
Dan Cristofaro-Gardiner, Michael Hutchings, and Daniel Pomerleano.
\newblock Torsion contact forms in three dimensions have two or infinitely many
  {R}eeb orbits.
\newblock {\em Geom. Topol.}, 23(7):3601--3645, 2019.

\bibitem[Deh17]{Dehornoy17}
Pierre Dehornoy.
\newblock Which geodesic flows are left-handed?
\newblock {\em Groups Geom. Dyn.}, 11(4):1347--1376, 2017.

\bibitem[dR84]{deRham}
Georges de~Rham.
\newblock {\em Differentiable manifolds}, volume 266 of {\em Grundlehren der
  mathematischen Wissenschaften [Fundamental Principles of Mathematical
  Sciences]}.
\newblock Springer-Verlag, Berlin, 1984.
\newblock Forms, currents, harmonic forms, Translated from the French by F. R.
  Smith, With an introduction by S. S. Chern.

\bibitem[FH21]{FlorioHryniewicz21}
Anna Florio and Umberto Hryniewicz.
\newblock Quantitative conditions for right-handedness.
\newblock {\em arXiv preprint arXiv:2106.12512}, 2021.

\bibitem[Fra96]{franks}
John Franks.
\newblock Area preserving homeomorphisms of open surfaces of genus zero.
\newblock {\em New York J. Math.}, 2:1--19, electronic, 1996.

\bibitem[Ghy09]{Ghys09}
\'{E}tienne Ghys.
\newblock Right-handed vector fields \& the {L}orenz attractor.
\newblock {\em Jpn. J. Math.}, 4(1):47--61, 2009.

\bibitem[HWZ03]{HWZ}
H.~Hofer, K.~Wysocki, and E.~Zehnder.
\newblock Finite energy foliations of tight three-spheres and {H}amiltonian
  dynamics.
\newblock {\em Ann. of Math. (2)}, 157(1):125--255, 2003.

\bibitem[KV03]{KotschickVogel}
D.~Kotschick and T.~Vogel.
\newblock Linking numbers of measured foliations.
\newblock {\em Ergodic Theory Dynam. Systems}, 23(2):541--558, 2003.

\bibitem[McD87]{McDuff87}
Dusa McDuff.
\newblock Applications of convex integration to symplectic and contact
  geometry.
\newblock {\em Ann. Inst. Fourier (Grenoble)}, 37(1):107--133, 1987.

\bibitem[Tau07]{TaubesWeinstein}
Clifford~Henry Taubes.
\newblock The {S}eiberg-{W}itten equations and the {W}einstein conjecture.
\newblock {\em Geom. Topol.}, 11:2117--2202, 2007.

\bibitem[Tem72]{Tempelman72}
A.~A. Tempel'man.
\newblock Ergodic theorems for general dynamical systems.
\newblock {\em Trudy Moskov. Mat. Ob\v{s}\v{c}.}, 26:95--132, 1972.

\bibitem[Tem92]{TempelmanBook}
Arkady Tempelman.
\newblock {\em Ergodic theorems for group actions: Information and
  thermodynamical aspects}.
\newblock Kluwer, 1992.

\bibitem[Vog03]{Vogel03}
Thomas Vogel.
\newblock On the asymptotic linking number.
\newblock {\em Proc. Amer. Math. Soc.}, 131(7):2289--2297, 2003.

\bibitem[Wad75]{Wadsley75}
A.~W. Wadsley.
\newblock Geodesic foliations by circles.
\newblock {\em J. Differential Geometry}, 10(4):541--549, 1975.

\end{thebibliography}

\end{document}